\newtheorem{theorem}{Theorem}[section]
\newtheorem{lemma}[theorem]{Lemma}
\newtheorem{remark}[theorem]{Remark}
\newtheorem{definition}[theorem]{Definition}
\newcommand{\GF}{\mathrm{GF}}
\newcommand{\AGammaL}{\mathrm{A} \Gamma \mathrm{L}}
\newcommand{\Sym}{\mathsf{Sym}}
\newcommand{\Cay}{\mathsf{Cay}}
\newcommand{\GPaley}{\mathsf{GPaley}}
\newcommand{\spa}{\mathsf{span}}
\newcommand{\cp}{\scriptstyle \square \textstyle}
\newcommand{\vl}{\, | \,}
\newcommand{\cpss}{\, \scriptstyle \square \textstyle}
\newcommand{\Aut}{\mathsf{Aut}\,}
\newcommand{\F}{\mathbb{F}}
\renewcommand\leq{\leqslant}
\renewcommand\geq{\geqslant}
\title{Generalised Paley graphs with a product structure}
\author{Geoffrey Pearce and Cheryl E Praeger}
\date{November 2016}
\begin{document}
\begin{abstract}
A graph is {\em Cartesian decomposable} if it is isomorphic to a Cartesian product of (more than one) strictly smaller graphs, each of which has more than one vertex and admits no such decomposition. These smaller graphs are called the {\em Cartesian-prime} factors of the Cartesian decomposition, and were shown, by Sabidussi and Vizing independently, to be uniquely determined up to isomorphism.  We characterise by their parameters those generalised Paley graphs which are Cartesian decomposable, and we prove that for such graphs, the Cartesian-prime factors are themselves smaller generalised Paley graphs. This generalises a result of Lim and the second author which deals with the case where all the Cartesian-prime factors are complete graphs.  These results contribute to the determination, by parameters, of generalised Paley graphs with automorphism groups larger than the 1-dimensional affine subgroups used to define them.
\end{abstract}
\maketitle

% Key Words:  Catesian products of graphs, graph automorphisms, generalised Paley graphs

% Authors: Geoffrey Pearce, Perth Modern School Subiaco, WA, f.frogeye@gmail.com

%Funding ARC: DP160102323

\section{Introduction}
Let  $p$ be a prime, $n$ a positive integer, and $\F_{p^n} = \GF(p^n)$ the finite field of order $p^n$. 
For a factorisation $p^n-1=dk$, such that if $p$ is odd then $k$ is even, 
we define the {\em generalised Paley graph} $\GPaley(p^{n},k)$ as the graph with vertex set $\F_{p^n}$, 
such that the edges are the pairs $\{x,y\}$ of vertices 
for which $x-y$ is a $d^{th}$-power in $\F_{p^n}^* = \F_{p^n} \backslash \{0\}$. The condition 
`$k$ is even if $p$ is odd' ensures that the adjacency relation is symmetric, defining an 
undirected graph.  If $d=2$ then $\GPaley(p^{n},k)$ is a Paley graph, named in honour
of Raymond Paley. (In the literature Paley's 1933 paper \cite{Pal} is often cited for
this construction. However that paper concerns a construction, based on finite fields, of what we now call 
Hadamard matrices; a paper exploring the origin of the name Paley 
graph is being prepared \cite{J}.) 
Some other generalised Paley graphs 
(in particular with $d=3$ or $4$)  were studied because of their graph theoretic properties and also their applications
in coding theory and to maps on compact surfaces. We make a few comments about these applications in Remark~\ref{apps}.

Generalised Paley graphs as defined above were introduced by Lim and the second author in \cite{LP}.   
From the definition it can easily be seen that $\GPaley(p^{n},k)$ admits as a subgroup of automorphisms the
group $G(p^n,k)$ generated by the translations $x\mapsto x+a$ ($a\in\F_{p^n}$), 
the $d^{th}$-power multiplications $x\mapsto x a^d$  ($a\in\F_{p^n}^*$), where $p^n-1=dk$,
and the field automorphisms $x\mapsto x^{p^i}$ ($i\leq n$). 
The group $G(p^n,k)$ is sometimes referred to as the `affine subgroup' as it is the intersection
of the full automorphism group of $\GPaley(p^{n},k)$ with the 1-dimensional affine group
$\AGammaL(1,p^n)$. Moreover, $G(p^n,k)$ acts transitively on the arcs of $\GPaley(p^n,k)$,
demonstrating that these graphs are arc-transitive.
Sometimes $G(p^n,k)$ is the full automorphism group 
(for example, if $k$ is a multiple of $(p^n-1)/(p-1)$, see \cite[Theorem 1.2]{LP}),
and \cite[Problem 1.5]{LP} asks for a characterisation of all values of $p,n,k,d$ for which this is the case. 

In particular, a larger automorphism group occurs if   $\GPaley(p^{n},k)$ 
is not connected. This is known to happen precisely when $k \vert (p^a-1)$ for some 
proper divisor $a$ of $n$, and in this case the connected components are all isomorphic 
to a smaller $\GPaley(p^{n'},k')$ for a certain subfield $\F_{p^{n'}}$ contained in $\F_{p^a}$,  
\cite[Theorem 2.2]{LP}.   We henceforth assume that $\GPaley(p^{n},k)$ is connected. 
This is equivalent to assuming that $k$ is a \emph{primitive divisor} of $p^n-1$, 
that is to say,  $k\vert (p^n-1)$ but $k \not\vert (p^a-1)$ for any $a < n$.  

Another family of generalised Paley graphs with larger automorphism groups  is 
characterised in \cite[Theorem 1.2(2)]{LP}: namely, $\GPaley(p^{n},k)$ is
isomorphic to a Hamming graph if and only if $k= b (p^{n/b}-1)$ for some divisor $b$ of $n$
such that $b>1$.
Here we characterise a family of generalised Paley graphs which properly contains 
the Hamming graphs: we determine precisely when a generalised Paley graph is 
Cartesian decomposable, as defined in Subsection~\ref{cpdefs}. 

%Our main result determines the parameters $p,n,k$ for which $\GPaley(p^{n},k)$ is Cartesian decomposable.
%For a positive integer $a$, we say that a divisor $r$ of $p^a-1$ is \emph{primitive} if $r$ does not divide
%$p^b-1$ for any $b<a$.  With this notation, \cite[Theorem 1.2(2)]{LP} proves that $\GPaley(p^{n},k)$ is
%connected if and only if $k$ is a primitive divisor of $p^n-1$.

\begin{theorem} \label{cpforward}
Let $p$ be a prime, $n$ a positive integer, $k$ a primitive divisor of $p^n-1$, and $\Gamma = 
\GPaley(p^n,k)$ $($ so $\Gamma$ is connected $)$. Then the following are equivalent.
\begin{enumerate}
\item[(a)]  $\Gamma$ is Cartesian decomposable;

\item[(b)] $k=bc$ such that $b > 1$, $b\vl n$, and $c$ is a primitive divisor of $p^{n/b} - 1$;

\item[(c)] $\Gamma \cong\ \cpss^b \ \Gamma_0$, where $\Gamma_0 = \GPaley(p^{n/b},c)$, with $b, c$ as in $(b)$.
\end{enumerate}
\end{theorem}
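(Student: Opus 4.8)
The plan is to prove $(\mathrm{b})\Rightarrow(\mathrm{c})$, $(\mathrm{c})\Rightarrow(\mathrm{a})$ and $(\mathrm{a})\Rightarrow(\mathrm{c})$; since statement $(\mathrm{c})$ already incorporates the arithmetic conditions of $(\mathrm{b})$, the implication $(\mathrm{c})\Rightarrow(\mathrm{b})$ is automatic, and then all three statements are equivalent. The implication $(\mathrm{c})\Rightarrow(\mathrm{a})$ is quick: with $b,c$ as in $(\mathrm{b})$ the graph $\Gamma_{0}=\GPaley(p^{n/b},c)$ has $p^{n/b}\geq2$ vertices and is connected by \cite[Theorem 2.2]{LP}, so replacing each factor $\Gamma_{0}$ of $\cpss^{b}\Gamma_{0}$ by its Sabidussi--Vizing Cartesian-prime factorisation writes $\Gamma$ as a Cartesian product of at least $b\geq2$ Cartesian-prime graphs, each having more than one but fewer than $p^{n}$ vertices.

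For $(\mathrm{b})\Rightarrow(\mathrm{c})$ I would set $m=n/b$, $q=p^{m}$, and regard $\F_{p^{n}}=\F_{q^{b}}$ as a $b$-dimensional vector space over $K=\F_{q}$. Write $D\leq\F_{p^{n}}^{*}$ for the subgroup of order $k=bc$, so $\Gamma=\Cay(\F_{p^{n}},D)$, and $C\leq K^{*}$ for the subgroup of order $c$, so $\GPaley(q,c)=\Cay(K,C)$; since $\F_{p^{n}}^{*}$ is cyclic and $c\mid bc\mid q^{b}-1$, $C$ is the unique subgroup of order $c$ of $\F_{p^{n}}^{*}$, and in particular $C\leq D$. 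The key point is that the $K$-span $W$ of $D$ in $\F_{p^{n}}$ is closed under multiplication, as $D$ is and $1\in D$, hence is a subfield of $\F_{p^{n}}$ containing $K$; moreover, because $k=bc$ is a primitive divisor of $p^{n}-1$ whereas $q^{b'}-1=p^{b'm}-1$ with $b'm<n$ for every proper divisor $b'$ of $b$, the subgroup $D$ of order $k$ lies in no $\F_{q^{b'}}^{*}$, and as these exhaust the proper subfields of $\F_{p^{n}}$ containing $K$ we get $W=\F_{p^{n}}$. Hence any transversal $v_{1}=1,v_{2},\dots,v_{b}$ of $C$ in $D$ is a $K$-basis of $\F_{p^{n}}$, the cosets $Cv_{1},\dots,Cv_{b}$ partition $D$, and the $K$-coordinate map $x=\sum_{i}x_{i}v_{i}\mapsto(x_{1},\dots,x_{b})$ is an additive bijection $\F_{p^{n}}\to K^{b}$ carrying $D$ onto the connection set of $\cpss^{b}\GPaley(q,c)$ --- the tuples with a single nonzero entry lying in $C$ --- and therefore a graph isomorphism, which is $(\mathrm{c})$.

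For $(\mathrm{a})\Rightarrow(\mathrm{c})$ I would take the Cartesian-prime factorisation $\Gamma=\Gamma_{1}\cpss\cdots\cpss\Gamma_{r}$ with $r\geq2$, each $\Gamma_{i}$ connected, and write $D$ for the order-$k$ connection set of $\Gamma=\Cay(\F_{p^{n}},D)$. Every automorphism preserves the canonical partition of $E(\Gamma)$ into the classes of the $r$ Cartesian factors, in particular the translations do; so $D=D_{1}\sqcup\cdots\sqcup D_{r}$, where $D_{i}$ is the set of neighbours of $0$ on an edge of class $i$, the layer of class $i$ through $0$ is the additive subgroup $A_{i}$ generated by $D_{i}$, one has $\F_{p^{n}}=A_{1}\oplus\cdots\oplus A_{r}$, and $\Gamma_{i}=\Cay(A_{i},D_{i})$. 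By arc-transitivity $\Aut\Gamma$ permutes the $r$ classes transitively, so all $\Gamma_{i}$ are isomorphic; comparing orders gives $|A_{i}|=p^{n/b}$ with $b:=r>1$ (forcing $b\mid n$) and comparing valencies gives $k=bc$ with $c:=|D_{i}|$. Indexing so that $1\in D_{1}$: for $s\in D_{1}$ the multiplication $x\mapsto sx$ is an automorphism of $\Gamma$ (since $sD=D$) sending the class-$1$ edge $\{0,1\}$ to the edge $\{0,s\}$, which is again class-$1$; so this automorphism fixes the class of index $1$, and hence sends every class-$1$ edge $\{0,t\}$ $(t\in D_{1})$ to a class-$1$ edge $\{0,st\}$, i.e. $D_{1}D_{1}\subseteq D_{1}$. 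Thus $D_{1}\ni1$ is finite and multiplicatively closed, hence a subgroup $C:=D_{1}$ of $D$; its additive span $A_{1}$ is, as above, a subfield of $\F_{p^{n}}$, necessarily $\F_{p^{n/b}}$ since $|A_{1}|=p^{n/b}$; so $C\leq\F_{p^{n/b}}^{*}$ has order $c$, whence $c\mid p^{n/b}-1$, $\Gamma_{1}=\Cay(\F_{p^{n/b}},C)=\GPaley(p^{n/b},c)$, and $-1\in C$ as $\Gamma$ is undirected (so $c$ is even when $p$ is odd). Finally $\Gamma_{1}$ is connected, so \cite[Theorem 2.2]{LP} makes $c$ a primitive divisor of $p^{n/b}-1$; as each $\Gamma_{i}\cong\Gamma_{1}$ this yields $\Gamma\cong\cpss^{b}\GPaley(p^{n/b},c)$, which is $(\mathrm{c})$, and with it $(\mathrm{b})$.

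The step I expect to cause the most trouble is the opening of the $(\mathrm{a})\Rightarrow(\mathrm{c})$ argument --- converting the abstract Cartesian-prime factorisation into the concrete data $\F_{p^{n}}=\bigoplus_{i}A_{i}$ with $\Gamma_{i}=\Cay(A_{i},D_{i})$ and $D_{i}=D\cap A_{i}$ --- which requires checking that the factor-classes of edges are translation-invariant, that the layers through $0$ are additive subgroups, and that they sum directly to $\F_{p^{n}}$; this leans on the standard theory of layers and Cartesian-prime (relative-prime) decompositions and their behaviour under the regular translation action, and must be set up carefully. Once that scaffolding is in place, the single observation that the additive span of a multiplicative subgroup of a finite field is a subfield does the essential work in both directions, with the rest reducing to bookkeeping with orders and valencies and the connectedness criterion of \cite[Theorem 2.2]{LP}.
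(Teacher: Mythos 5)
Your implications (b)$\Rightarrow$(c)$\Rightarrow$(a) are correct and amount to a streamlined version of the paper's Lemma~\ref{bimpliesc}: you use the same two facts (the $\F_{p^{n/b}}$-span of a multiplicative subgroup containing $1$ is a subfield, and primitivity of $k$ forces that subfield to be all of $\F_{p^n}$), and your coordinate map with respect to the basis of coset representatives of $C$ in $D$ neatly replaces the paper's longer identification of each $\langle S_i \rangle^+$ with $\F_{p^{n/b}}\xi^{di}$ before invoking Lemma~\ref{cartpower}. Your later observation that $D_1$ is multiplicatively closed, because $\hat{s}$ fixes the colour class containing $\{0,1\}$ for each $s\in D_1$, is also correct and is a pleasant alternative to the paper's block-of-imprimitivity argument for $\hat{S}$ acting on $S$.

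The genuine gap is in (a)$\Rightarrow$(c), at exactly the step you flag and then defer to ``standard theory'': the claim that the Cartesian-prime factorisation induces $\F_{p^n}=A_1\oplus\cdots\oplus A_r$ with $A_i=\langle D_i\rangle^+$ equal to the layer through $0$ and $\Gamma_i=\Cay(A_i,D_i)$. No general theory delivers this: for an arbitrary Cayley graph the translations need not preserve the colour classes and the group need not split along the factorisation --- $C_4=K_2\,\cpss\, K_2$ realised as $\Cay(\mathbb{Z}_4,\{\pm1\})$ is a counterexample, where translation by $1$ swaps the two colour classes and $\mathbb{Z}_4$ admits no corresponding direct decomposition. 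Establishing your scaffolding for $\GPaley(p^n,k)$ is the technical heart of the paper's Lemma~\ref{aimpliesbc}: there one shows that the vertex set $V_b$ of the layer through $0$ is a block for the translation group $T$ by a counting argument ($|P_b^T|$ is a $p$-power dividing $|{\mathcal P}|=p^{n-n/b}b$ with $\gcd(b,p)=1$, hence divides $p^{n-n/b}$, and then $p^n=|V\Gamma|\leq |P_b^T|\cdot|V_b|\leq p^n$ forces equality); only then is $V_b$ an additive subgroup, which together with the easy lower bound $|\langle S_b\rangle^+|\geq p^{n/b}$ pins down $\langle S_b\rangle^+=\F_{p^{n/b}}$ and hence $c\mid p^{n/b}-1$. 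Without an argument of this kind your assertions that $|A_1|=p^{n/b}$ and that $\langle D_1\rangle^+$ is the subfield of order exactly $p^{n/b}$ are unsupported (the elementary estimates only give $|\langle D_1\rangle^+|\geq p^{n/b}$), so the primitivity of $c$ is not yet proved. You need to supply this step, either by the paper's block/counting argument or by some other argument exploiting that $T$ is an elementary abelian $p$-group and $b\mid p^n-1$.
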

%. 
Indeed if $\GPaley(p^{n},k)$ is Cartesian decomposable, then 
all of its Cartesian-prime factors are isomorphic and are themselves smaller generalised Paley graphs, so that
the automorphism group is larger than $G(p^n,k)$, (see the discussion in Section~\ref{cpsect}).  
Hamming graphs correspond to the case where $c=p^{n/b}-1$, or equivalently, $\Gamma_0 = \GPaley(p^{n/b},p^{n/b}-1)$
is the complete graph $K_{p^{n/b}}$ on $p^{n/b}$ vertices. 
Theorem~\ref{cpforward} is proved in Section \ref{proofs}.  

We note that the example $\GPaley(81,20)$ given in \cite[Example 1.6]{LP} is not
Cartesian decomposable (since $20$ is not equal to $bc$ for any primitive divisor 
of $3^{4/b}-1$ for $b=2$ or $4$), and yet has automorphism group (of order $233,280$)
larger than the affine subgroup $G(81,20)$. Thus there is still more to be 
discovered before we have a complete solution to \cite[Problem 1.5]{LP}: 
a determination of all $p, n, k$ such that
$\Aut(\GPaley(p^n,k))=G(p^n,k)$.
   
\begin{remark}\label{apps} 
{\rm
Various combinatorial properties of the family of generalised 
Paley graphs
$\GPaley(p^n,k)$ have been studied in the literature, especially the cases
$\frac{p^n-1}{k}\in\{3,4\}$. For example, their
adjacency properties were studied in \cite{A, E}, while the graphs were exploited to obtain
improved lower bounds for Ramsay numbers in \cite{HWS, HWZ, KWH, WQHG}.
The latter work was, in turn, developed further in \cite{WSLX} where the authors use a different
generalisation of Paley graphs: in the case of graphs with a prime number of vertices, 
their graphs have edge sets which are unions of the edge sets of certain generalised 
Paley graphs studied here. 
The cliques and colourings of generalised Paley graphs were studied in \cite{SS} and, 
in particular, equality of the clique and covering number of 
$\GPaley(p^n,k)$ was shown to imply that the associated affine subgroup $G(p^n,k)$ 
is non-synchronising \cite[Theorem 5.2]{SS}. 
Generalised Paley graphs have also been investigated in connection with permutation decoding. 
Earlier work \cite{GK,KL} on codes derived from the row span of adjacency and 
incidence matrices of Paley graphs was extended in \cite{SL} for all generalised 
Paley graphs.   
Finally, Chapter 9.9.1 of the book \cite{JW} on graph embeddings in Riemann surfaces 
deals with generalised Paley maps - the underlying graphs are 
generalised Paley graphs. The authors show in \cite[Theorem 9.2]{JW} that, if $M$ is 
a regular map on a compact surface, then the automorphism group of $M$ acts primitively 
and faithfully on vertices if and only if $M$ is isomorphic to a generalised Paley map.
}
\end{remark}

\noindent
{\em Acknowledgement:} \quad We are grateful to Gareth Jones for helpful discussions about the origin of the name Paley graphs.
We are also grateful for the nudge he gave us to write up our work for publication.  The beginnings of this investigation go back to an undergraduate research project of the first author.

\section{The Cartesian product} \label{cpsect}

A graph $\Gamma$ consists of a set $V\Gamma$ of vertices and a subset $E\Gamma$ of unordered pairs of distinct vertices, 
called the edges of $\Gamma$. A graph is connected if for any two vertices $\alpha, \beta$, there exists a finite vertex sequence
$\alpha_0,\alpha_1,\dots,\alpha_r$ such that $\alpha_0=\alpha, \alpha_r=\beta$, and  $\{\alpha_{i-1},\alpha_i\}\in E\Gamma$ for $i=1,\dots,r$. 
An arc of $\Gamma$ is an ordered pair $(\alpha,\beta)$ such that $\{\alpha,\beta\}\in E\Gamma$; and $\Gamma$ is arc-transitive if its 
automorphism group acts transitively on arcs. Note that each $\GPaley(p^n,k)$ is arc-transitive since $G(p^n,k)$ is transitive on arcs.

\subsection{Definitions} \label{cpdefs}
For a set of graphs $\Gamma_1,\ldots,\Gamma_b$, the \emph{Cartesian product} $\Gamma_1\, \cp \ldots\, \cp\, \Gamma_b$ is the graph with vertex set $V\Gamma_1 \times \ldots \times V\Gamma_b$ and edges $\{(\alpha_1,\ldots,\alpha_b),(\beta_1,\ldots,\beta_b)\}$ whenever there exists $i$ such that $\{\alpha_i,\beta_i\} \in E\Gamma_i$, and $\alpha_j = \beta_j$ for all $j \neq i$.  The Cartesian product construction is both commutative and associative (up to isomorphism).  We write $\cp^b\, \Gamma_0$ to mean 
$\Gamma_0\, \cp \ldots\, \cp\,\Gamma_0$ where the factor graph $\Gamma_0$ occurs $b$ times.

A graph $\Gamma$ is said to be \emph{Cartesian decomposable} if  $\Gamma \cong\, \Gamma_1 \cp \ldots\, \cp\, \Gamma_b$ for some $b > 1$, 
such that each $\Gamma_i$ has at least two vertices; and $\Gamma$ is called \emph{Cartesian-prime} if $|V\Gamma|>1$ and no such decomposition exists. If $\Gamma$ is a finite graph with at least two vertices and $\Gamma$ is not Cartesian-prime, then clearly 
$\Gamma$ has at least one decomposition $\Gamma_1\, \cp \ldots\, \cp\, \Gamma_b$ with $b>1$ and each of the $\Gamma_i$ Cartesian-prime.  
Sabidussi \cite{S} and Vizing \cite{V} showed independently that, if $\Gamma$ is connected with at least two vertices and $\Gamma$ is not Cartesian-prime, then the Cartesian-prime `factors' $\Gamma_i$ are unique up to isomorphism and the order of the factors, see \cite[Theorem 6.6]{HIK}.
The condition that $\Gamma$ is connected is necessary, see  \cite[Theorem 6.2]{HIK}, and indeed when $\Gamma$ is connected, each of its Cartesian-prime factors $\Gamma_i$ is also connected. 
To facilitate our analysis we introduce the following `standard form' for our graphs.

\begin{definition} {\em
Let $\Delta = \Gamma_1\, \cp \ldots\, \cp\, \Gamma_b$.  We call $\Delta$ a {\em simple} Cartesian product if the following hold.
\begin{itemize}
\item[(i)] $b > 1$ and each $\Gamma_i$ is Cartesian-prime.
\item[(ii)]  For $1 \leq j < k \leq b$, if $\Gamma_j \cong \Gamma_k$ then $\Gamma_j = \Gamma_k$ (by which we mean that $V\Gamma_j = V\Gamma_k$ and $E\Gamma_j = E\Gamma_k$).
\end{itemize}
}
\end{definition}

Note that if $\Gamma$ is not Cartesian-prime, then there exists a simple Cartesian product $\Delta = \Gamma_1\, \cp \ldots\, \cp\, \Gamma_b$ such that $\Gamma \cong \Delta$.

\subsection{Automorphisms}
Let $\Delta = \Gamma_1\, \cp \ldots\, \cp\, \Gamma_b$ be a simple Cartesian product.  We identify two types of automorphisms of $\Delta$.  Firstly, for $1 \leq i \leq b$, any automorphism $g_i$ of $\Gamma_i$ induces an automorphism of $\Delta$ in the following action on vertices of $\Delta$\,:
$$
g_i : (\alpha_1,\ldots,\alpha_{i},\ldots,\alpha_{b}) \longmapsto (\alpha_1,\ldots,\alpha_{i}^{g_i},\ldots,\alpha_{b}).
$$
Thus we have $\Aut\Gamma_1 \times \ldots \times \Aut\Gamma_b \leq \Aut\Delta$.
Secondly, we may permute equal Cartesian factors (recall that, by assumption, the Cartesian factors are either non-isomorphic or equal).  Such an automorphism may be viewed as an element $\sigma$ of $\Sym(\{1,\ldots,b\})$ acting on vertices of $\Delta$ by 
$$
\sigma : (\alpha_1,\ldots,\alpha_b) \longmapsto (\alpha_{1\sigma^{-1}},\ldots,\alpha_{b\sigma^{-1}}).
$$
Such an element $\sigma$ is a well defined element of $\Sym(V\Delta)$, and if $\sigma\ne 1$ then $\sigma$ induces a non-trivial automorphism of $\Delta$ if and only if, for each $i$, $\Gamma_i = \Gamma_{i\sigma^{-1}}$ (note that this does not necessarily mean that $i = i^{\sigma^{-1}}$).  Let $W_\Delta$ denote the group of all such automorphisms $\sigma$ of $\Delta$.  

The group $\langle g_i, W_\Delta \vl g_i \in \Aut\Gamma_i,\ 1\leq i\leq b \rangle$ generated by all the automorphisms described above is equal to $(\Aut\Gamma_1 \times \ldots \times \Aut\Gamma_b) \rtimes W_\Delta$.

\subsection{An induced partition of the edge set} \label{inducedE}
Suppose that $\Delta = \Gamma_1\, \cp\, \ldots \cp\, \Gamma_b$, and let $1 \leq i \leq b$.  For 
a $(b-1)$-tuple 
$$
\bar{\delta}_i :=(\delta_1,\ldots,\delta_{i-1},\delta_{i+1},\ldots,\delta_b) \in V\Gamma_1 \times \ldots \times V\Gamma_{i-1} \times V\Gamma_{i+1} \times \ldots \times V\Gamma_b,
$$ 
define 
$$
E(i,\bar{\delta}_i) := \{ \{\bar{\alpha},\bar{\beta}\} \vl \bar{\alpha},\bar{\beta} \in V\Delta, \{\alpha_i,\beta_i\} \in E\Gamma_i, \alpha_j = \beta_j = \delta_j \; \mathrm{for} \; j \neq i\}.
$$
Let 
$${\mathcal E}_\Delta := \{E(i,\bar{\delta}_i) \vl 1 \leq i \leq b, \bar{\delta}_i \in V\Gamma_1 \times \ldots \times V\Gamma_{i-1} \times V\Gamma_{i+1} \times \ldots \times V\Gamma_b\}.$$ Then ${\mathcal E}_\Delta$ is a partition of $E\Delta$, and we call this the {\em induced Cartesian edge partition}.

Note that for each $E(i,\bar{\delta}_i) \in {\mathcal E}_\Delta$, the subgraph of $\Delta$ induced by $E(i,\bar{\delta}_i)$ is isomorphic to $\Gamma_i$.

\subsection{Some results pertaining to the Cartesian product}
The uniqueness, for a connected graph $\Gamma$, of its Cartesian decomposition with Cartesian-prime factors has
important consequences for its symmetry. The following result is essentially \cite[Theorem 6.10]{HIK},
with part (b) an immediate corollary.

\begin{theorem} \label{ImKlav}
Suppose that $\Delta = \Gamma_1\, \cp \ldots\, \cp\, \Gamma_b$ is a simple Cartesian product with (Cartesian-prime) factors $\Gamma_i$, 
and that $\Delta$ is connected.  Then
\begin{enumerate}
\item[(a)] $\Aut\Delta = (\Aut\Gamma_1 \times \ldots \times \Aut\Gamma_b) \rtimes W_\Delta$; and
\item[(b)] if  ${\mathcal E}_\Delta$ is the induced Cartesian edge partition defined in Section $\ref{inducedE}$, 
then $\Aut\Delta$ preserves ${\mathcal E}_\Delta$.
\end{enumerate}
\end{theorem}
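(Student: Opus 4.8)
The plan is to derive both statements from the fact that, for a connected graph, the Cartesian prime-factor decomposition --- and indeed the entire partition of the edge set into layers --- is canonical, i.e.\ recoverable from the isomorphism type of $\Delta$ alone. The standard tool is the Djokovi\'c--Winkler relation $\Theta$ on $E\Delta$: for edges $e=\{x,y\}$ and $f=\{u,v\}$ one sets $e\,\Theta\,f$ when $d(x,u)+d(y,v)\neq d(x,v)+d(y,u)$, where $d$ denotes distance in $\Delta$. This relation is defined purely in terms of $\Delta$, hence is invariant under $\Aut\Delta$, and so is its transitive closure $\Theta^{*}$, which is an equivalence relation. The substantive input from the theory of Cartesian products (see \cite[Chapter~6]{HIK}) is that, because $\Delta$ is connected, the $\Theta^{*}$-classes are exactly the sets $C_i := \bigcup_{\bar\delta_i} E(i,\bar\delta_i)$ for $1\le i\le b$ --- the ``full layer in direction $i$'' --- and these $b$ classes are pairwise distinct.

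Granting this, part (b) is quick. Each $\varphi\in\Aut\Delta$ permutes $C_1,\dots,C_b$ among themselves, so it suffices to see that each block $E(i,\bar\delta_i)$ of ${\mathcal E}_\Delta$ is recoverable from $C_i$ in a graph-theoretic way. Since each Cartesian-prime factor of a connected graph is connected, $E(i,\bar\delta_i)$ is precisely the edge set of a connected component of the spanning subgraph $(V\Delta,C_i)$, and the sets $E(i,\bar\delta_i)$ for varying $\bar\delta_i$ are exactly the edge sets of the distinct components. As $\varphi$ carries $C_i$ to some $C_{i'}$ and maps connected components to connected components, it maps each block of ${\mathcal E}_\Delta$ onto another block; hence $\Aut\Delta$ preserves ${\mathcal E}_\Delta$.

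For part (a), the action of $\Aut\Delta$ on $\{C_1,\dots,C_b\}$ gives a homomorphism $\pi\colon \Aut\Delta\to\Sym(\{1,\dots,b\})$. Fixing a base vertex and tracking the layer through it in direction $i$, one sees that $\varphi$ sends this layer to the layer in direction $i\pi(\varphi)$; since the induced subgraphs on these two layers are isomorphic to $\Gamma_i$ and $\Gamma_{i\pi(\varphi)}$ respectively, we get $\Gamma_i\cong\Gamma_{i\pi(\varphi)}$, and by condition (ii) of a simple Cartesian product this forces $\Gamma_i=\Gamma_{i\pi(\varphi)}$. Hence the image of $\pi$ lies in $W_\Delta$; conversely $W_\Delta\le\Aut\Delta$ and $\pi$ restricted to $W_\Delta$ is the identity, so the image of $\pi$ equals $W_\Delta$ and $W_\Delta$ is a complement to $K:=\ker\pi$. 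It then remains to show $K=\Aut\Gamma_1\times\dots\times\Aut\Gamma_b$: an element of $K$ fixes each $C_i$, hence for each fixed $i$ permutes the components $E(i,\bar\delta_i)$ of $(V\Delta,C_i)$, and using the coordinatisation of layers (the $i$-th coordinate varies along a direction-$i$ layer, the others being constant on it) together with connectedness of $\Delta$, one shows such an automorphism acts coordinatewise, i.e.\ equals some $(g_1,\dots,g_b)$ with $g_i\in\Aut\Gamma_i$; conversely every such tuple lies in $K$. Thus $\Aut\Delta=K\rtimes W_\Delta=(\Aut\Gamma_1\times\dots\times\Aut\Gamma_b)\rtimes W_\Delta$, which is the group $\langle g_i, W_\Delta\rangle$ identified in the discussion preceding the theorem.

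The main obstacle is the identification of the $\Theta^{*}$-classes with the full layers $C_i$: this is the heart of the Sabidussi--Vizing uniqueness phenomenon, and in the write-up it should be quoted from \cite[Chapter~6]{HIK} (this is why the theorem is stated as being ``essentially \cite[Theorem~6.10]{HIK}'') rather than reproved. Once that is in hand, the only points needing a little care are verifying that $\pi(\varphi)$ actually lands in $W_\Delta$ --- rather than merely in the larger group of permutations of $\{1,\dots,b\}$ preserving the isomorphism types of the factors --- which is precisely where condition (ii) enters, and checking that the kernel $K$ acts strictly coordinatewise and does not merely stabilise each layer direction.
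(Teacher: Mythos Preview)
Your argument is correct, but note that the paper does not actually give a proof of this theorem: it simply states that the result ``is essentially \cite[Theorem~6.10]{HIK}, with part~(b) an immediate corollary'' and moves on. In that sense there is nothing in the paper for your proposal to be compared against; what you have written is a (correct) sketch of the proof that the cited reference supplies.

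Your route via the Djokovi\'c--Winkler relation $\Theta$ and its transitive closure $\Theta^{*}$ is exactly the standard machinery behind \cite[Theorem~6.10]{HIK}, so your expansion is faithful to the intended source. You have also correctly isolated the one place where the ``simple'' hypothesis (isomorphic factors are literally equal) is used, namely to ensure that the image of $\pi$ lands in $W_\Delta$ rather than in a possibly larger group of coordinate permutations, and you have flagged the step that genuinely needs care (that elements of the kernel act coordinatewise). If anything, you could tighten the justification of part~(b): once part~(a) is known, the fact that $(\Aut\Gamma_1\times\dots\times\Aut\Gamma_b)\rtimes W_\Delta$ preserves ${\mathcal E}_\Delta$ is immediate from the explicit description of how each generator acts, which is presumably what the paper means by ``an immediate corollary''. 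Your derivation of~(b) directly from the $\Theta^{*}$-classes is independent of~(a) and also valid, just slightly more than is needed.
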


As mentioned above the generalised Paley graphs are all arc-transitive, and this property forces their
Cartesian-prime factors to be isomorphic.

\begin{lemma} \label{arctranspower}
Let $\Delta = \Gamma_1\, \cp \ldots\, \cp\, \Gamma_b$ be a simple Cartesian product such that $\Delta$ is connected, and assume that $\Delta$ is arc-transitive.  Then there exists $\Gamma_0$ such that $\Gamma_i = \Gamma_0$ for all $i$.
\end{lemma}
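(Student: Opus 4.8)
The plan is to exploit Theorem~\ref{ImKlav}(b): since $\Delta$ is connected, $\Aut\Delta$ preserves the induced Cartesian edge partition $\mathcal{E}_\Delta$, meaning that each $g \in \Aut\Delta$ permutes the parts $E(i,\bar{\delta}_i)$ among themselves. Combined with arc-transitivity, this will force all the parts to induce pairwise isomorphic subgraphs, and hence (by the remark at the end of Section~\ref{inducedE}) all the $\Gamma_i$ to be isomorphic; condition~(ii) of the definition of a simple Cartesian product then upgrades `isomorphic' to `equal'.

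First I would record a preliminary observation: since $\Delta$ is connected, each Cartesian-prime factor $\Gamma_i$ is connected (as noted in Section~\ref{cpdefs}) and, being Cartesian-prime, has at least two vertices; hence each $\Gamma_i$ has at least one edge. Therefore every part $E(i,\bar{\delta}_i) \in \mathcal{E}_\Delta$ is non-empty, and so contains (the edge underlying) at least one arc of $\Delta$.

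Next, fix $i,j \in \{1,\dots,b\}$ and choose arcs $(\alpha,\beta)$ and $(\alpha',\beta')$ of $\Delta$ with $\{\alpha,\beta\} \in E(i,\bar{\delta}_i)$ and $\{\alpha',\beta'\} \in E(j,\bar{\delta}_j)$ for some tuples $\bar{\delta}_i,\bar{\delta}_j$. By arc-transitivity there is $g \in \Aut\Delta$ with $(\alpha,\beta)^g = (\alpha',\beta')$. Since $\mathcal{E}_\Delta$ is a partition of $E\Delta$, the edge $\{\alpha,\beta\}$ lies in the \emph{unique} part $E(i,\bar{\delta}_i)$ and its image $\{\alpha',\beta'\}$ lies in the unique part $E(j,\bar{\delta}_j)$; as $g$ permutes the parts of $\mathcal{E}_\Delta$ by Theorem~\ref{ImKlav}(b), it follows that $E(i,\bar{\delta}_i)^g = E(j,\bar{\delta}_j)$. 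Consequently $g$ restricts to an isomorphism from the subgraph of $\Delta$ induced by $E(i,\bar{\delta}_i)$ onto the subgraph induced by $E(j,\bar{\delta}_j)$; by the observation at the end of Section~\ref{inducedE}, these two subgraphs are isomorphic to $\Gamma_i$ and $\Gamma_j$ respectively, so $\Gamma_i \cong \Gamma_j$. Since $i,j$ were arbitrary, the factors $\Gamma_1,\dots,\Gamma_b$ are pairwise isomorphic, and condition~(ii) of the definition of a simple Cartesian product then forces $\Gamma_1 = \Gamma_2 = \cdots = \Gamma_b$; taking $\Gamma_0 := \Gamma_1$ completes the proof.

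The only delicate point — which I view as the step to get right rather than a genuine obstacle — is the bookkeeping in the previous paragraph: one must argue that an automorphism carrying one arc to another necessarily carries the entire edge-part containing the first onto the entire edge-part containing the second. This rests precisely on $\mathcal{E}_\Delta$ being a \emph{partition} of $E\Delta$ (so that each edge determines its part unambiguously) together with the invariance of $\mathcal{E}_\Delta$ under $\Aut\Delta$ supplied by Theorem~\ref{ImKlav}(b).
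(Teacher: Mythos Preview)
Your proof is correct and follows essentially the same route as the paper's: both use Theorem~\ref{ImKlav}(b) to see that $\Aut\Delta$ permutes the parts of $\mathcal{E}_\Delta$, and then use arc-transitivity to deduce that any two parts are related by an automorphism, whence the corresponding factors $\Gamma_i$ are isomorphic and therefore equal by condition~(ii). Your version is slightly more explicit in justifying why each part is non-empty and why mapping one arc to another forces the containing parts to correspond, but the underlying argument is the same.
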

\begin{proof}
Let ${\mathcal E}_\Delta$ be the induced Cartesian edge partition defined in Section \ref{inducedE}, and let $1 \leq j \leq b$.  Then ${\mathcal E}_\Delta$ at least one part $E(1,\bar{\delta}_1)$ with first entry $1$, and at least one part $E(j,\bar{\delta}_j)$ with first entry $j$.

By Theorem \ref{ImKlav}(b), $\Aut\Delta$ preserves the partition ${\mathcal E}_\Delta$, and since by assumption $\Delta$ is arc-transitive, $\Aut\Delta$ acts transitively on ${\mathcal E}_\Delta$.  Hence there exists $g \in \Aut\Delta$ with $E(1,\bar{\delta}_1)^g = E(j,\bar{\delta}_j)$.  The parts $E(1,\bar{\delta}_1)$ and $E(j,\bar{\delta}_j)$ induce subgraphs of $\Delta$ isomorphic to $\Gamma_1$ and $\Gamma_j$ respectively, and it follows that $\Gamma_1 \cong \Gamma_j$ (and in fact $\Gamma_1 = \Gamma_j$ as $\Delta$ is a simple Cartesian product).  As $j$ was arbitrary, the result now follows with $\Gamma_0 := \Gamma_1$.
\end{proof}

Cartesian decomposability is easily recognised for Cayley graphs:  
Let $G$ be a group and $S$ a subset of $G$, such that $1_G \not\in S$ and $S = S^{-1}$.  Then the \emph{Cayley graph} $\Cay(G,S)$ has vertex set $G$, and edges $\{x,y\}$ whenever $xy^{-1} \in S$. 
We note that a generalised Paley graph $\GPaley(p^n, k)$ can be viewed as a Cayley graph $\Cay(\F_{p^n}^+,S)$, where $\F_{p^n}^+$ is the additive group of $\F_{p^n}$, and where $S = \langle \xi^{(p^n-1)/k} \rangle$ for some primitive element $\xi$ of $\F_{p^n}$.

\begin{lemma} \label{cartpower}
Let $G$ be a group with subgroups $H_i$, for $1 \leq i \leq b$, such that $G = H_{1} \times \ldots \times H_{b}$.  For each $i$ let $S_i$ be a subset of $H_i$ such that $1 \not\in S_i$ and $S_i = S_i^{-1}$, and let $\Gamma_i= \Cay(H_{i},S_{i})$.  Let $S = \bigcup_{i=1}^{b} S_{i}$, and let $\Gamma = \Cay(G,S)$.  Then $\Gamma = \Gamma_1\, \cp \ldots\, \cp\, \Gamma_b$.
\end{lemma}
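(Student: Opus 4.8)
The plan is to show directly that $\Gamma$ and $\Gamma_1\, \cp \ldots\, \cp\, \Gamma_b$ are literally the same graph, by checking that they have the same vertex set and the same edge set. First I would set up the vertex identification. Since $G = H_1 \times \ldots \times H_b$ is an internal direct product, each $g \in G$ has a unique expression $g = g_1 \cdots g_b$ with $g_i \in H_i$, the subgroups $H_i$ pairwise commute, and $H_i \cap \langle H_j \vl j \neq i \rangle = 1$; so $g \mapsto (g_1, \ldots, g_b)$ is a bijection from $G$ onto $V\Gamma_1 \times \ldots \times V\Gamma_b$, and this is the common vertex set of the two graphs. I would also observe at the outset that $S = \bigcup_{i=1}^b S_i$ satisfies $1 \notin S$ and $S = S^{-1}$, inherited from the corresponding properties of the $S_i$, so $\Gamma = \Cay(G,S)$ is a well-defined undirected loopless graph; and that this union is in fact disjoint, because $S_i \subseteq H_i$, $1 \notin S_i$, and distinct $H_i$ meet only in $1$.

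Next, for vertices $x = (x_1, \ldots, x_b)$ and $y = (y_1, \ldots, y_b)$, commutativity of the factors gives $x y^{-1} = (x_1 y_1^{-1}, \ldots, x_b y_b^{-1})$, and uniqueness of coordinates means that this element lies in $H_i$ if and only if $x_j = y_j$ for all $j \neq i$. Hence $\{x,y\} \in E\Gamma$, i.e.\ $xy^{-1} \in S_i$ for some $i$, holds if and only if there is some $i$ with $x_i y_i^{-1} \in S_i$ and $x_j = y_j$ for all $j \neq i$. Since $\Gamma_i = \Cay(H_i, S_i)$, the condition $x_i y_i^{-1} \in S_i$ is exactly $\{x_i, y_i\} \in E\Gamma_i$, so this is precisely the defining condition for $\{x,y\}$ to be an edge of $\Gamma_1\, \cp \ldots\, \cp\, \Gamma_b$. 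Therefore $E\Gamma = E(\Gamma_1\, \cp \ldots\, \cp\, \Gamma_b)$, and the two graphs coincide.

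This argument is essentially a bookkeeping exercise, so I do not anticipate a genuine obstacle. The only points that need a little care are (i) invoking the internal direct product structure to justify both the coordinatewise formula for $xy^{-1}$ and the equivalence ``$xy^{-1} \in H_i$ iff $x_j = y_j$ for all $j \neq i$'', and (ii) verifying that $S$ itself meets the hypotheses ($1 \notin S$ and $S = S^{-1}$) needed for $\Cay(G,S)$ to make sense as an undirected graph. Everything else is an unwinding of the definitions of the Cayley graph and of the Cartesian product.
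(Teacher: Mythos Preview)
Your proposal is correct and follows essentially the same approach as the paper's proof: both identify $V\Gamma$ with $V\Gamma_1 \times \cdots \times V\Gamma_b$ via the direct product decomposition, and then show that $\{g,h\}\in E\Gamma$ iff $gh^{-1}\in S_i$ for some $i$ iff $g_ih_i^{-1}\in S_i$ and $g_j=h_j$ for $j\neq i$, which is precisely the Cartesian product edge condition. You include a few extra sanity checks (that $S$ is a valid connection set and that the union is disjoint) which the paper leaves implicit, but the argument is the same.
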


\begin{proof}
Note that $V\Gamma = G = H_1\times\dots\times H_b = V\Gamma_1\times\dots\times V\Gamma_b$.
Let $g=(g_1,\dots,g_b), h=(h_1,\dots,h_b)\in G$. Then $\{g,h\}\in E\Gamma$ if and only if $gh^{-1}\in S$
(by the definition of $\Cay(G,S)$), and this holds if and only if, for some $i$, $gh^{-1}\in S_i$ (since  
$S = \bigcup_{i=1}^{b} S_{i}$). This latter condition is equivalent to $g_ih_i^{-1}\in S_i$ and $g_j
=h_j$ for all $j\ne i$. Finally this holds if and only if $\{g,h\}$ is an edge of
$ \Gamma_1\, \cp \ldots\, \cp\, \Gamma_b$ (by the definition of the Cartesian product). 
\end{proof}

\section{Proof of the main result} \label{proofs}

\subsection{Preparation}\label{prep} 
We write $\F_{p^n}^+$ for the additive group of $\F_{p^n}$, and for a subset $U$ of $\F_{p^n}$ we write $\langle U \rangle^+$ for the additive subgroup of $\F_{p^n}$ generated by $U$.  Recall that for any subfield $\F_{p^{n/b}}$ (of order $p^{n/b})$, the field $\F_{p^n}$ has the structure of a $b$-dimensional vector space over $\F_{p^{n/b}}$.  In this context, we write $\spa_{p^{n/b}}(U)$ to denote the $\F_{p^{n/b}}$-span of $U$ as a subspace of $\F_{p^n}$.

Let $\xi$ be a primitive element of $\F_{p^n}$, let  $k$ be a primitive divisor of $p^n-1$, and let $\Gamma = \GPaley(p^n,k)$. Then, as discussed above, $\Gamma=\Cay(\F_{p^n}^+,S)$ with $S = \langle \xi^{(p^n-1)/k} \rangle$.  For each $u \in \F_{p^n}$, let $t_u$ denote the translation $t_u : \F_{p^n} \longrightarrow \F_{p^n} : x \longmapsto x + u$, and let $T := \{t_u \vl u \in \F_{p^n}\}$, the translation group of $\F_{p^n}$. Then $T \cong \F_{p^n}^+$.  For each $s \in S$, let $\hat{s}$ be the map $\hat{s} : \F_{p^n} \longrightarrow \F_{p^n} : x \longmapsto xs$, and let $\hat{S} := \{\hat{s} \vl s \in S\}$.  Then $G(p^n,k)=T \rtimes \hat{S}$ is the affine subgroup of $\Aut \Gamma$.

The following Lemma is proved in part (2) of the proof of \cite[Theorem 2.2]{LP}.
\begin{lemma} \label{PraLim222}
Let $\F$ be a finite field and let $S$ be a multiplicative subgroup of $\F^*$.  Then $\langle S \rangle^+$ is a subfield of $\F$.
\end{lemma}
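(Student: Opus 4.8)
The statement asserts that if $S \leq \F^*$ is a multiplicative subgroup of a finite field $\F$, then the additive closure $\langle S \rangle^+$ is a subfield of $\F$. The plan is to show $A := \langle S \rangle^+$ is closed under multiplication (it is automatically an additive subgroup, contains $0$, and is closed under additive inverses since $\F$ has a finite additive group, so it only remains to check multiplicative closure and that it contains $1$; then any finite subset of a field closed under addition, subtraction and multiplication containing $1$ is a subring, hence — being a finite integral domain — a subfield). Since $1 \in S \subseteq A$ when $S$ is nonempty (a subgroup always contains the identity), the only real content is multiplicative closure.

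First I would record that every element of $A$ is a finite sum $\sum_j s_j$ with $s_j \in S$, because $A$ is the additive subgroup generated by $S$ and $\F$ has characteristic $p$, so additive inverses of sums of elements of $S$ are again sums of elements of $S$ (namely, $-s = (p-1)s$ is a sum of copies of $s \in S$). Next, the key step: for a fixed $s \in S$, the map $x \mapsto sx$ is an additive automorphism of $\F$ that maps $S$ into $S$ (as $S$ is a subgroup), hence maps the additive subgroup generated by $S$ into itself, i.e. $sA \subseteq A$ for all $s \in S$. Then for a general element $a = \sum_j s_j \in A$ and any $b \in A$, distributivity gives $ab = \sum_j s_j b$, and each $s_j b \in A$ by the previous sentence, so $ab \in A$. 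Hence $A$ is closed under multiplication.

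Having established that $A$ is a subset of $\F$ closed under addition, additive inverses, and multiplication, and containing $0$ and $1$, it is a subring of $\F$; being finite and contained in a field it has no zero divisors, and a finite commutative ring with identity and no zero divisors is a field. Therefore $A = \langle S \rangle^+$ is a subfield of $\F$.

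I do not expect any serious obstacle here; the only point requiring a little care is the observation that $-s$ lies in the additive semigroup generated by $S$ (so that ``sums of elements of $S$'' is already closed under negation in characteristic $p$), which makes the description of elements of $A$ clean and lets the distributivity argument go through without fuss. An alternative, essentially equivalent route would be to note that $A$ is an $\F_p$-subspace of $\F$ stable under multiplication by the subgroup $S$, hence an $\F_p[S]$-submodule, and then argue multiplicative closure directly; but the elementary sum-based argument above is the most transparent.
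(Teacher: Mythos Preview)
Your argument is correct: the key observation that $sA\subseteq A$ for each $s\in S$ (because multiplication by $s$ is an additive endomorphism sending $S$ to $S$), combined with the description of elements of $A$ as sums of elements of $S$ in characteristic $p$, cleanly gives multiplicative closure, and the finite-integral-domain step finishes it.

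As for comparison with the paper: the paper does not actually prove this lemma. It simply records the statement and cites part~(2) of the proof of \cite[Theorem~2.2]{LP} for the argument. So there is no in-paper proof to compare against; your self-contained proof fills that gap and is the standard elementary route one would expect the cited argument to take.
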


\subsection{Proof of Theorem~\ref{cpforward}}
 From now on let  $p, n, k, \Gamma, S$  be as in Subsection~\ref{prep},
and let $d=(p^n-1)/k$.
Then $\Gamma$ is connected, by \cite[Theorem 2.2]{LP}. 

\begin{lemma}\label{bimpliesc} 
Suppose that $k=bc$ such that $b > 1$, $b\vl n$, and $c$ is a primitive divisor of $p^{n/b} - 1$.
Then $\Gamma \cong\ \cpss^b \ \Gamma_0$, where $\Gamma_0 = \GPaley(p^{n/b},c)$ is connected;
in particular $\Gamma$ is Cartesian decomposable.
\end{lemma}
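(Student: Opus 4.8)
The plan is to realise $\Gamma=\Cay(\F_{p^n}^+,S)$ directly as a Cartesian product via Lemma~\ref{cartpower}, by exhibiting an internal direct sum $\F_{p^n}^+=H_1\oplus\dots\oplus H_b$ together with subsets $S_i\subseteq H_i$ such that $S=\bigcup_i S_i$ and $\Cay(H_i,S_i)\cong\Gamma_0$. Write $q:=p^{n/b}$, so $\F_{p^n}$ is a $b$-dimensional vector space over its subfield $\F_q=\F_{p^{n/b}}$. Since $c\mid q-1$ and $\F_{p^n}^*$ is cyclic, the unique subgroup $S_0\leq\F_{p^n}^*$ of order $c$ lies inside the unique subgroup of order $q-1$, namely $\F_q^*$; also $S_0\leq S$ as $c\mid k$, and $S_0$ is precisely the set of $((q-1)/c)$-th powers in $\F_q^*$, so by the Cayley-graph description of $\GPaley$-graphs (cf.\ Subsection~\ref{prep}) we have $\Gamma_0=\GPaley(q,c)=\Cay(\F_q^+,S_0)$, which is connected by \cite[Theorem~2.2]{LP} because $c$ is a primitive divisor of $q-1$.

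The central step is building the decomposition. Choose coset representatives $\mu_1,\dots,\mu_b$ for $S_0$ in $S$ (there are $[S:S_0]=k/c=b$ of them), and set $S_i:=\mu_iS_0$ and $H_i:=\spa_q(S_i)=\mu_i\,\spa_q(S_0)$. By Lemma~\ref{PraLim222}, $\langle S_0\rangle^+$ is a subfield of $\F_q$; it contains $S_0$ and hence a cyclic subgroup of order $c$, so since $c$ is a primitive divisor of $q-1$ this forces $\langle S_0\rangle^+=\F_q$, whence $\spa_q(S_0)=\F_q$ and each $H_i=\mu_i\F_q$ is a $1$-dimensional $\F_q$-subspace of $\F_{p^n}$. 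Likewise $\langle S\rangle^+=\F_{p^n}$ (since $\Gamma$ is connected, or again by Lemma~\ref{PraLim222} and primitivity of $k$), so $\sum_{i=1}^b H_i=\spa_q\!\big(\textstyle\bigcup_i S_i\big)=\spa_q(S)=\F_{p^n}$. A sum of $b$ one-dimensional subspaces equal to the $b$-dimensional space $\F_{p^n}$ must be direct, so $\F_{p^n}^+=H_1\oplus\dots\oplus H_b$ (internal direct product of additive groups) and $\mu_1,\dots,\mu_b$ is an $\F_q$-basis.

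It remains to assemble the pieces. For each $i$, the map $\F_q^+\to H_i$, $\lambda\mapsto\mu_i\lambda$, is a group isomorphism carrying $S_0$ onto $S_i$, so $\Cay(H_i,S_i)\cong\Cay(\F_q^+,S_0)=\Gamma_0$; moreover $S_i\subseteq S\subseteq\F_{p^n}^*$ gives $0\notin S_i$, and $S_i=\mu_iS_0=\mu_i(-S_0)=-S_i$ since $S_0=-S_0$ (equivalently $-1\in S_0$, which holds because $\Gamma_0$ is an undirected graph). Since $\bigcup_{i=1}^b S_i=S$ by the coset decomposition, Lemma~\ref{cartpower} applied with $G=\F_{p^n}^+=H_1\times\dots\times H_b$ gives $\Gamma=\Cay(\F_{p^n}^+,S)=\Cay(H_1,S_1)\cp\dots\cp\Cay(H_b,S_b)\cong\cpss^b\,\Gamma_0$. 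As $b>1$ and $\Gamma_0$ has $q=p^{n/b}\geq 2$ vertices, $\Gamma$ is Cartesian decomposable, and $\Gamma_0$ is connected as noted above.

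I expect the main obstacle to be the middle paragraph — showing that $b$ coset representatives of $S_0$ in $S$ are $\F_q$-linearly independent. The argument above reduces this to the connectivity of $\Gamma$ and of $\Gamma_0$, which is exactly where the two ``primitive divisor'' hypotheses enter (via Lemma~\ref{PraLim222}). An alternative, avoiding the span computation, is to take $\mu_i=g^{i-1}$ with $g=\xi^{(p^n-1)/k}$ a generator of $S$, and to show $g$ has degree exactly $b$ over $\F_q$: otherwise $g$ would lie in a proper subfield $\F_{p^{n'}}$ of $\F_{p^n}$ containing $\F_q$, so the order $k$ of $g$ would divide $p^{n'}-1$ with $n'<n$, contradicting that $k$ is a primitive divisor of $p^n-1$.
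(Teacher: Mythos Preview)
Your proof is correct and follows essentially the same route as the paper's: partition $S$ into the $b$ cosets of its order-$c$ subgroup $S_0\subseteq\F_q^*$, show the coset representatives form an $\F_q$-basis of $\F_{p^n}$ so that the additive spans give an internal direct sum, identify each factor $\Cay(H_i,S_i)$ with $\Gamma_0$ via multiplication by $\mu_i$, and apply Lemma~\ref{cartpower}. The only organisational difference is that you invoke Lemma~\ref{PraLim222} together with the primitivity of $c$ to obtain $\langle S_0\rangle^+=\F_q$ at the outset (so each $H_i=\mu_i\F_q$ is one-dimensional, and the directness of the sum follows by a dimension count), whereas the paper first deduces that the coset representatives span from connectivity of $\Gamma$ and then pins down $\langle S_j\rangle^+$ by a cardinality argument; both arguments use exactly the same ingredients.
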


\begin{proof}
By assumption, $k=|S|=bc$, so that $C:=\langle \xi^{\frac{p^n-1}{c}} \rangle$ is a 
subgroup of order $c$ of the multiplicative group $S$.  For $1 \leq i \leq b$, let 
$S_i = C\xi^{di}$, a multiplicative coset of $C$ in $S$. Observe that the $S_i$ 
are pairwise distinct, and $S_b=C$. 
Further, $S = \bigcup_{i=1}^b S_i$, so $\{S_1, \ldots, S_b\}$ is a partition of $S$.

As $\Gamma$ is connected, we have $\F_{p^n}^+ = \langle S \rangle^+$; and in particular, 
$\F_{p^n} = \spa_{p^{n/b}}(S)$.  Now $B = \{\xi^{di} \vl 1 \leq i \leq b\}$ is a set of 
coset representatives for $C$ in $S$.  Since $c \vl (p^{n/b}-1)$, $C$ is contained in 
the subfield $\F_{p^{n/b}}$, and %since $c$ is a primitive divisor of $p^{n/b}-1$, 
it follows that $\spa_{p^{n/b}}(B) = \F_{p^n}$.  
In fact since $|B| = b$, $B$ is a basis for $\F_{p^n}$ as a vector space over $\F_{p^{n/b}}$.  
Thus $\F_{p^n}$ has a direct sum decomposition $\F_{p^n} = \bigoplus_{i=1}^b \spa_{p^{n/b}}(\xi^{di})$.

Now, for each $j$ with $1 \leq j \leq b$, we have $\langle S_j \rangle^+ \subseteq 
\spa_{p^{n/b}}(S_j)$.  As $S_j = C \xi^{dj}$ and $C \subseteq \F_{p^{n/b}}$, it follows 
that $\langle S_j \rangle^+ \subseteq \spa_{p^{n/b}}(\xi^{dj}) = \F_{p^{n/b}} \xi^{dj}$.  
Thus $|\langle S_j \rangle^+| \leq p^{n/b}$.

Let $1 \leq j \leq b$, and suppose, for a contradiction, that $\langle S_j \rangle^+ 
\neq \spa_{p^{n/b}}(\xi^{dj})$.  Then $|\langle S_j \rangle^+| < p^{n/b}$.  Since $\Gamma$ is connected,
$$
p^n = |\langle S \rangle^+| \leq |\bigoplus_{i=1}^b \langle S_i \rangle^+| \leq |\langle S_j \rangle^+|.(p^{n/b})^{b-1}.
$$
Since $|\langle S_j \rangle^+| < p^{n/b}$, the cardinality $|\langle S_j \rangle^+|.(p^{n/b})^{b-1} < p^n$; 
a contradiction.  Hence $\langle S_j \rangle^+ = \spa_{p^{n/b}}(\xi^{dj})$, and since $j$ 
was arbitrary and $\F_{p^n} =  \bigoplus_{i=1}^b \spa_{p^{n/b}}(\xi^{di})$, it follows that 
$\F_{p^n} = \bigoplus_{i=1}^b \langle S_i \rangle^+$.

Since $\langle C \rangle^+$ is contained in $\F_{p^{n/b}}$, and since $|\langle C \rangle^+| = p^{n/b}$ 
we have $\langle C \rangle^+ = \F_{p^{n/b}}$.  Since $C$ is a multiplicative subgroup of $\F_{p^{n/b}}^*$
of order $c$, the graph $\Cay(\F_{p^{n/b}}^+,C) = \GPaley(p^{n/b},c)$, and this graph is connected 
since $c$ is a primitive divisor of $p^{n/b}-1$.

As we showed above, for each $i$ the subset $S_i = C \xi^{di}$ and $\langle S_i \rangle^+ = 
\langle C \rangle^+ \xi^{di}$, and it is a straightforward consequence that the automorphism 
$\hat{\xi}^{di}$ of $\Gamma$ induces an isomorphism from $\Cay(\F_{p^{n/b}}^+,C)$ to 
$\Cay(\langle S_i \rangle^+, S_i)$.  Hence $\Cay(\langle S_i \rangle^+,S_i) \cong \GPaley(p^{n/b},c)$ 
for each $i$.  Since $\langle S \rangle^+ = \bigoplus_{i=1}^b \langle S_i \rangle^+$ (which is 
equivalent to $\langle S \rangle^+ = \langle S_1 \rangle^+ \times \ldots \times \langle S_b \rangle^+$), 
we may now apply Lemma \ref{cartpower} to obtain the result.
\end{proof}

We now prove the converse.

\begin{lemma}\label{aimpliesbc}
Suppose that $\Gamma=\GPaley(p^n,k)$ is Cartesian decomposable. Then
$k=bc$ where $b > 1$, $b\vl n$, and $c$ is a primitive divisor of $p^{n/b} - 1$.
\end{lemma}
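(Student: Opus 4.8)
The plan is to exploit the uniqueness of the Cartesian-prime decomposition together with the arc-transitivity of $\Gamma$. Since $\Gamma$ is Cartesian decomposable and connected, write $\Gamma \cong \Delta = \Gamma_1\, \cp \ldots\, \cp\, \Gamma_b$ as a simple Cartesian product with $b>1$ and all $\Gamma_i$ Cartesian-prime. By Lemma~\ref{arctranspower}, arc-transitivity of $\Gamma$ forces all factors to be equal to a single graph $\Gamma_0$; say $|V\Gamma_0| = m$, so $p^n = m^b$. Since $p^n$ is a prime power, $m = p^{e}$ for some $e$ with $eb = n$; in particular $b \vl n$ and $m = p^{n/b}$. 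This already disposes of the divisibility condition $b\vl n$; the substance of the lemma is to identify $\Gamma_0$ with a generalised Paley graph $\GPaley(p^{n/b},c)$ and to check that $c := |S|/b = k/b$ (so $k = bc$) is a primitive divisor of $p^{n/b}-1$.

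First I would pin down the Cartesian factors additively. Using the Cayley description $\Gamma = \Cay(\F_{p^n}^+, S)$ with $S = \langle \xi^{d}\rangle$, $d = (p^n-1)/k$, I would argue that the induced Cartesian edge partition $\mathcal{E}_\Delta$ of Section~\ref{inducedE} corresponds, under the isomorphism $\Gamma\cong\Delta$, to a partition of the "connection-set edges" at the vertex $0$, hence to a partition $S = S_1 \cup \ldots \cup S_b$ with each $S_i = S_i^{-1}$ and $0\notin S_i$, together with a direct sum decomposition $\F_{p^n}^+ = V_1 \oplus \ldots \oplus V_b$ of additive groups such that $S_i \subseteq V_i$ and (by connectedness of each factor) $\langle S_i\rangle^+ = V_i$; this is essentially the reverse of Lemma~\ref{cartpower}, and each $\Gamma_0 \cong \Cay(V_i, S_i)$. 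Since the affine group $G(p^n,k) = T\rtimes \hat S$ is transitive on arcs and preserves $\mathcal{E}_\Delta$ (Theorem~\ref{ImKlav}(b)), the multiplications $\hat s$ ($s\in S$) permute the blocks $V_i$; because $\hat S \cong S$ acts on the $b$ blocks, the stabiliser of $V_1$ is a subgroup of index dividing $b$, so $S^{b!}$ — more cheaply, the subgroup $C$ of $S$ of order $|S|/(\text{orbit length})$ — fixes $V_1$ setwise, whence $C \cdot V_1 = V_1$ and $\langle C\rangle^+ \cdot V_1 = V_1$. By Lemma~\ref{PraLim222}, $\langle C\rangle^+$ is a subfield $\F_q$ of $\F_{p^n}$, and $V_1$ is an $\F_q$-subspace; counting gives $q \leq m = p^{n/b}$, and I expect the sharp case $q = p^{n/b}$ to follow from the direct-sum count $p^n = \prod |V_i| = m^b$ forcing each $V_i$ to be exactly a $1$-dimensional $\F_{p^{n/b}}$-space once $q=p^{n/b}$ is established (the inequality argument mirroring the contradiction step in the proof of Lemma~\ref{bimpliesc}). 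Then $|C| = |S|/b = k/b =: c$, which forces $b \vl k$, and $C$ is a multiplicative subgroup of $\F_{p^{n/b}}^*$ of order $c$, so $\langle C\rangle^+ = \F_{p^{n/b}}$ and $\Gamma_0 \cong \Cay(\F_{p^{n/b}}^+, C) = \GPaley(p^{n/b},c)$.

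Finally, $c$ must be a \emph{primitive} divisor of $p^{n/b}-1$: this is exactly the condition that $\Gamma_0 = \GPaley(p^{n/b},c)$ is connected, which holds because $\Gamma_0$ is a Cartesian-prime factor of the connected graph $\Gamma$ and Cartesian-prime factors of connected graphs are connected (Section~\ref{cpdefs}); by \cite[Theorem 2.2]{LP}, connectedness of $\GPaley(p^{n/b},c)$ is equivalent to $c$ being a primitive divisor of $p^{n/b}-1$. Assembling the pieces: $k = bc$ with $b>1$, $b\vl n$, and $c$ a primitive divisor of $p^{n/b}-1$, as required.

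The main obstacle I anticipate is the step identifying the additive blocks $V_i$ as $\F_{p^{n/b}}$-lines — that is, promoting the abstract fact "some multiplicative subgroup $C\le S$ stabilises a block $V_i$" into the precise statement "$|C| = k/b$ and $V_i$ is an $\F_{p^{n/b}}$-line", ruling out the a priori possibility that $\hat S$ acts on the blocks with a kernel larger than expected or that the block fixed by $C$ is a proper subspace of an $\F_q$-line for $q > p^{n/b}$. I expect this to be handled by the same cardinality squeeze used in Lemma~\ref{bimpliesc}: $p^n = \prod_{i=1}^b |V_i|$ with each $|V_i| \le p^{n/b}$ (from the $\F_q$-subspace structure and $q\le p^{n/b}$) forces equality throughout, pinning down both $q = p^{n/b}$ and each $|V_i| = p^{n/b}$ simultaneously.
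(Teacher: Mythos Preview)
Your outline matches the paper's strategy in broad strokes, but there is a real gap at the step where you assert that the Cartesian decomposition yields an additive direct sum $\F_{p^n}^+ = V_1\oplus\cdots\oplus V_b$ with $S_i\subseteq V_i$, calling this ``essentially the reverse of Lemma~\ref{cartpower}''. That converse is not automatic: a graph isomorphism $\varphi:\Delta\to\Gamma$ carries the Cartesian fibres of $\Delta$ to \emph{subsets} $V_i$ of $\F_{p^n}$, but nothing yet forces these to be additive subgroups. Equivalently, the translation group $T$, viewed inside $\Aut\Delta = \Aut\Gamma_0\wr S_b$ via Theorem~\ref{ImKlav}(a), might project nontrivially to the top group $S_b$, in which case $T$ would not preserve the individual fibre partitions and the fibre through $0$ would not be a $T$-block. (For a cautionary analogue: $C_4\cong K_2\,\cp\,K_2$ is a Cayley graph on $\Z_4$, yet $\Z_4$ has no direct-product decomposition with factors of order~$2$.) You cannot pass to ``$\langle S_i\rangle^+ = V_i$'' or invoke Lemma~\ref{PraLim222} until this is settled, and your later ``cardinality squeeze'' presupposes exactly the direct-sum structure in question.

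The paper handles precisely this point with a counting argument you have not supplied. With $P_b\in\mathcal P$ the edge-part containing $\{0,\xi^{db}\}$ and $V_b$ its vertex set, one computes $|\mathcal P|=p^{n-n/b}\,b$. Since $T\triangleleft G(p^n,k)$, the orbit $P_b^T$ is a block for $G^{\mathcal P}$, and it has $p$-power size as $T$ is a $p$-group; because $b\mid k\mid p^n-1$ gives $\gcd(b,p)=1$, it follows that $|P_b^T|$ divides $p^{n-n/b}$. Together with $|V_b|=p^{n/b}$ and transitivity of $T$ on $V\Gamma$, the squeeze
\[
p^n=\Bigl|\bigcup_{t\in T}V_b^t\Bigr|\le |P_b^T|\cdot|V_b|\le p^{n-n/b}\cdot p^{n/b}=p^n
\]
forces the distinct $V_b^t$ to partition $V\Gamma$, so $V_b$ is a $T$-block and hence an additive subgroup containing $S_b$. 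Only then does Lemma~\ref{PraLim222} (combined with the lower bound $|\langle S_b\rangle^+|\ge p^{n/b}$ from the coset structure) identify $V_b=\langle S_b\rangle^+=\F_{p^{n/b}}$. Your anticipated ``main obstacle''---pinning down $|C|=k/b$ and $q=p^{n/b}$---is actually routine once the additive structure of $V_b$ is established; the step you passed over is where the substantive work lies.
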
 

\begin{proof}
Since $\Gamma$ is arc-transitive, by Lemma \ref{arctranspower} there exists $b$ 
such that $\Gamma \cong \cp^b \Gamma_0$, and since $\Gamma$ is Cartesian decomposable,
we may assume that $b>1$ and that $\Gamma_0$ is Cartesian-prime.  Hence $|V\Gamma_0|^b = |V\Gamma| = p^n$, 
and so $b$ divides $n$ and $|V\Gamma_0| = p^{n/b}$.  Let $c$ be the valency of 
$\Gamma_0$.  Then the valency $k$ of $\Gamma$ is equal to $bc$, and it remains 
to show that $c$ is a primitive divisor of $p^{n/b}-1$.

Let $d = \frac{p^n-1}{k}$, let $S = \langle \xi^d \rangle$, and observe that $S$ consists of all the 
vertices of $\Gamma$ adjacent to $0$.  Let $\Delta = \cp^b \Gamma_0$, and let 
${\mathcal E}_\Delta$ be the induced Cartesian edge partition of $\Delta$, as 
defined in Section \ref{inducedE}.  Let $\varphi$ be an isomorphism from $\Delta$ 
to $\Gamma$, and let ${\mathcal P} = ({\mathcal E}_\Delta)\varphi$; that is, 
${\mathcal P} = \{(E)\varphi \vl E \in {\mathcal E}_\Delta\}$ where 
$(E)\varphi = \{ \{(\alpha)\varphi,(\beta)\varphi\} \vl \{\alpha,\beta\} \in E\}$ 
for all $E \in {\mathcal E}_\Delta$.  Thus ${\mathcal P}$ is the partition of $E\Gamma$ 
corresponding to ${\mathcal E}_\Delta$ under $\varphi$, and each part in ${\mathcal P}$ induces a subgraph 
of $\Gamma$ isomorphic to $\Gamma_0$. Let $P_b$ be the part in ${\mathcal P}$ containing 
the edge $\{0,\xi^{db}\}$, and let $S_b$ be the subset of $S$ consisting of vertices $\alpha$ such that $\{0,\alpha\} \in P_b$. Then $|S_b| = c$, the valency of $\Gamma_0$.

Since $\Gamma_0$ is Cartesian-prime, it follows from Theorem \ref{ImKlav} that 
$\Aut\Delta = \Aut\Gamma_0\wr S_k$ and preserves ${\mathcal E}_\Delta$. 
Hence $\Aut\Gamma$ must preserve ${\mathcal P}$. The subgroup $\hat{S} = 
\langle \hat{\xi}^d \rangle \leq \Aut\Gamma$ acts transitively on $S$, 
and since $\hat{S}$ preserves ${\mathcal P}$ and fixes $0$, $S_b$ is a 
block of imprimitivity  for the induced
group $\hat{S}^S$.  Since $\hat{S}^S$ is regular, this implies that $S_b$ 
is a coset of a multiplicative subgroup of $S$, and that the size $c$ of $S_b$ 
divides $|S|$.  Since $S$ is cyclic, there is a unique subgroup of order $c$, 
namely $\langle \xi^{\frac{p^n-1}{c}} \rangle$.  As $\xi^{\frac{p^n-1}{c}} = 
\xi^{db} \in S_b$ (by the definition of $S_b$), we have $S_b = \langle 
\xi^{\frac{p^n-1}{c}} \rangle$.  

Let $S_1,\ldots,S_{b-1}$ be the remaining $b-1$ cosets of $S_b$ in $S$; 
so for $1 \leq i \leq b$ there is an element $v_i \in S$ such that $S_i 
= S_bv_i$ (with $v_b = \xi^{db}$).  For each $i$, we have $\langle 
S_b v_i \rangle^+ = \langle S_b \rangle^+ v_i$, and it follows that 
$|\langle S_i \rangle^+| = |\langle S_b \rangle^+|$ for each $i$.  
The size of $\langle S \rangle^+$ is at most $|\bigoplus_{i=1}^b 
\langle S_i \rangle^+| = |\langle S_b \rangle^+|^b$.  As $\langle S 
\rangle^+ = \F_{p^n}$ (since $\Gamma$ is connected), it follows that 
$|\langle S_b \rangle^+| \geq p^{n/b}$.

Let $G=G(p^n,k) = T \rtimes \hat{S} \leq \Aut\Gamma$ as defined in 
Section \ref{prep}, and recall that $G$ acts transitively on edges and hence 
on ${\mathcal P}$. Noting that the valency of $\Gamma$ is $k=bc$, and that $P_b$ 
induces a subgraph with $p^{n/b}$ vertices and valency $c$, we obtain
$$
|{\mathcal P}| = \frac{|E\Gamma|}{|P_b|} = \frac{p^n k/2}{p^{n/b} c/2} = (p^{n-n/b}) b.
$$
As $T \triangleleft G$, the $T$-orbit $P_b^T$ is a block of imprimitivity for $G^{\mathcal P}$, 
and so $|P_b^T|$ divides $|{\mathcal P}|$. Since $T$ is a $p$-group it
follows by the `orbit-stabiliser theorem' that $|P_b^T|$ is a $p$-power, and 
since $b=k/c$ divides $p^n-1$, this implies that $|P_b^T|$ divides 
$p^{n-n/b}$. 
Next consider the set $V_b$
of vertices of $\Gamma$ incident with an edge of $P_b$. Then $(V_b)\varphi^{-1}$ 
is the set of vertices of $\Delta = \cp^b \Gamma_0$ incident with an edge 
of $(P_b)\varphi^{-1} \in {\mathcal E}_\Delta$. It follows from the definition 
of ${\mathcal E}_\Delta$ in Section~\ref{inducedE} that $|V_b| =|(V_b)\varphi^{-1}|
= |V\Gamma_0|$ which is $p^{n/b}$. Finally, since $T$ is transitive on 
$V\Gamma$ and $\Gamma$ is connected,  $ p^n = 
|V\Gamma| = |\bigcup_{t \in T} V_b^t|$. The union has size at most 
$|P_b^T|\times |V_b|\leq p^{n-n/b}\times p^{n/b} = p^n$. 
We must have equality, and this occurs only if $|P_b^T| = p^{n-n/b}$ 
and distinct images $V_b^t, V_b^{t'}$ are pairwise disjoint. 
Hence $V_b$ is a block of imprimitivity for $T$ acting on $V\Gamma$, 
which means that $V_b$ is an additive subgroup of $\F_{p^n}^+ = V\Gamma$.
Since the set $S_b \subset V_b$, the additive subgroup 
$\langle S_b \rangle^+ \leq V_b$. Then since $|V_b| = p^{n/b}$ and 
(as we showed earlier) $|\langle S_b \rangle^+| \geq p^{n/b}$, we obtain 
that $|\langle S_b \rangle^+| = p^{n/b}$, so $V_b = \langle S_b \rangle^+$. 
It now follows from Lemma \ref{PraLim222} that $V_b$ is the 
additive group of a subfield, namely the unique subfield $\F_{p^{n/b}}$
of order $p^{n/b}$. In particular, $S_b$ is a subgroup of the multiplicative group 
of $\F_{p^{n/b}}$, so $c\,|\,(p^{n/b}-1)$. The fact that $\langle S_b \rangle^+$ 
is equal to the full additive group of $\F_{p^{n/b}}$ implies that $S_b$ is 
contained in no proper subfield, and hence $c$ does not divide $p^a-1$ for any $a< n/b$. 
Thus $c$ is a primitive divisor of $p^{n/b} - 1$. 
\end{proof}

Theorem~\ref{cpforward} now follows from Lemmas~\ref{bimpliesc} and  ~\ref{aimpliesbc}.

\thebibliography{10}

\bibitem{A} 
W. Ananchuen, On the adjacency properties of generalized Paley graphs, 
\emph{Austral. J. Combin.} {\bf24} (2001), 129--147.

\bibitem{E}
Ahmed Nouby Elsawy, \emph{Paley graphs and their generalizations}, MSc thesis, 
Heinrich Heiner University, Germany, 2009.  arXiv:1203.1818v1

\bibitem{GK} 
D. Ghinellie, and J. D. Key. Codes from incidence matrices and line graphs 
of Paley graphs. \emph{Adv. Math. Commun.} {\bf5}  (2011), 93--108.

\bibitem{HWS} 
Luo Haipeng, Su Wenlong, Yun-Qiu Shen, New lower bounds for two multicolor
classical Ramsey Numbers, \emph{Radovi Matematicki} {\bf 13} (2004), 15--21.

\bibitem{HWZ} 
Luo Haipeng, Su Wenlong, Li Zhenchong, The properties of self-complementary
graphs and new lower bounds for diagonal Ramsey Numbers, \emph{Australasian J. Combin.}
{\bf25} (2002), 103--116.

\bibitem{HIK} 
R. Hammack, W. Imrich and S. Klav\v{z}ar, \emph{Handbook of Product Graphs}, 2nd Edition, 
CRC Press, Boca Raton, 2011.

\bibitem{J}
Gareth A. Jones, \emph{Paley and the Paley graphs}, in preparation.

\bibitem{JW}
Gareth Jones and J\"urgen Wolfart, \emph{Dessins d'Enfants on Riemann Surfaces}, 
Springer International Publishing Switzerland, 2016. 

\bibitem{KWH} 
Wu Kang, Su Wenlong, Luo Haipeng, et al., The parallel algorithm for new lower
bounds of Ramsey number R(3, 28), (in Chinese), \emph{Application research of computers}
{\bf 9} (2004), 40--41.   

\bibitem{KL} 
J. D. Key and J. Limbupasiriporn, Partial permutation decoding for codes 
from Paley graphs. \emph{Congr. Numer.} {\bf170}  (2004), 143--155.

\bibitem{LP}
Tian Khoon Lim and Cheryl E. Praeger, On Generalised Paley Graphs and their automorphism groups, 
\emph{Michigan Math. J.} {\bf 58} (2009), 294--308.

\bibitem{Pal}
R. E. A. C. Paley, On orthogonal matrices. {\em J. Math. Phys. Mass. Inst. Tech.}, {\bf12}
(1933), 311--320.

\bibitem{S}
G. Sabidussi, Graph multiplication, {\em Math. Zeitschr.} {\bf72} (1960), 446--457.

\bibitem{SS}
Csaba Schneider and Ana C. Silva, Cliques and colorings in generalized Paley 
graphs and an approach to synchronization, \emph{J. Algebra Appl.} {\bf14}  (2015)
1550088\\   doi: 10.1142/S0219498815500887

\bibitem{SL} 
Padmapani Seneviratne and Jirapha Limbupasiriporn. 
Permutation decoding of codes from generalized Paley graphs,
\emph{Applicable Algebra in Engineering, Communication and Computing}
{\bf24} (2013), 225--236.

\bibitem{V}
V. G. Vizing,  The Cartesian product of graphs.  {\em Vycisl. Sistemy} {\bf9} (1963), 30--43. MR0209178 

\bibitem{WQHG} 
Su Wenlong, Li Qiao, Luo Haipeng and Li Guiqing, Lower Bounds of Ramsey Numbers 
based on cubic residues, \emph{Discrete Mathematics} {\bf250} (2002), 197--209.

\bibitem{WSLX} 
Kang Wu, Wenlong Su, Haipeng Luo and Xiaodong Xu, A generalization of generalized Paley graphs
and new lower bounds for $R(3, q)$, \emph{Electr. J. Combin.} {\bf17} (2010), \# N25.

\end{document}